\documentclass{article}
\usepackage{amsmath,amssymb}
\usepackage{amscd}
\usepackage{comment}
\usepackage{color}
\usepackage{enumerate}
\usepackage{cases}
\usepackage{mathtools}
\usepackage[all]{xy}
\usepackage{amsthm}
\usepackage{pb-diagram}
\theoremstyle{definition}
\newtheorem{defi}{Definition}
\newtheorem{theo}[defi]{Theorem}
\newtheorem{lemm}[defi]{Lemma}

\newtheorem{ex}[defi]{Example}
\newtheorem{rem}[defi]{Remark}


\def\det{{\rm det}}

\def\End{{\rm End}}

\def\det{{\rm det}}

\def\R{{\mathbb R}}
\def\Z{{\mathbb Z}}
\def\C{{\mathbb C}}
\def\N{{\mathbb N}}

\def\inum{{\sqrt{-1}}}

\begin{document}

\title {Generalizations of Hermitian-Einstein equation of cyclic Higgs bundles, their heat equation, and inequality estimates}
\author {Natsuo Miyatake}
\date{}
\maketitle
\begin{abstract} 
We introduce some generalizations of the Hermitian-Einstein equation for diagonal harmonic metrics on cyclic Higgs bundles, including a generalization using subharmonic functions. When the coefficients are all smooth, we prove the existence, uniqueness, and convergence of the solution of their heat equations with Dirichlet boundary conditions. We also generalize two inequality estimates for solutions of the Hermitian-Einstein equation of cyclic Higgs bundles.
\end{abstract}
\section{Introduction}
Let $V$ be a real vector space defined as $V\coloneqq \{x=(x_1,\dots, x_r)\in\R^r\mid x_1+\cdots+x_r=0\}$. We take a generator $v_1,\dots, v_r$ of $V$ defined as $v_j\coloneqq u_{j+1}-u_j \ (j=1,\dots, r-1), \ v_r\coloneqq u_1-u_r$, where we denote by $u_1,\dots, u_r$ the canonical basis of $\R^r$. Let $(M,g_M)$ be a Riemannian manifold with geometric Laplacian $\Delta_{g_M}$. Let $a_1,\dots, a_r:M \rightarrow \R_{\geq0}$ be non-negative functions and $w:M\rightarrow V$ a $V$-valued function. We suppose that for each $j=1,\dots, r$, the function $a_j$ is not identically zero. We consider the following PDE on $M$:
\begin{align}
\Delta_{g_M}\xi+\sum_{j=1}^ra_je^{(v_j, \xi)}v_j=w. \label{KW}
\end{align}
Our main theorems are as follows:
\begin{theo}\label{main theorem 1} {\it Let $(M,g_M)$ be a compact Riemannian manifold with a smooth, possibly empty boundary $\partial M$. Suppose that $a_1,\dots, a_r,w$ are all smooth functions. Then, for any smooth function $\eta: M\rightarrow V$, there exists a unique one parameter family $(\xi_t)_{t\in[0,\infty)}$ that is continuous on $M\times [0,\infty)$ and smooth on $M\times (0,\infty)$, such that: 
\begin{align}
&\frac{d\xi_t}{dt}+\Delta_{g_M}\xi_t+\sum_{j=1}^ra_je^{(v_j,\xi_t)}v_j-w=0 \ \text{for all $t\in(0,\infty)$}, \label{heat equation}\\
&\xi_t\left.\right|_{\partial M}=\eta\left.\right|_{\partial M} \ \text{for all $t\in[0,\infty)$}, \ \xi_0=\eta. \label{boundary}
\end{align}
Moreover, for the solution $(\xi_t)_{t\in [0,\infty)}$ of (\ref{heat equation}), with the boundary condition (\ref{boundary}), the following holds:
\begin{enumerate}[(i)]
\item Suppose that the boundary $\partial M$ is not empty. Then there exists a sequence $(t_i)_{i\in \N}$ going to infinity such that $(\xi_{t_i})_{i\in \N}$ converges in $C^\infty$ to a smooth solution $\xi$ of equation (\ref{KW}) satisfying the Dirichlet boudary condition $\xi\left.\right|_{\partial M}=\eta\left.\right|_{\partial M}$. Furthermore, the solutions of equation (\ref{KW}) satisfying the same Dirichlet boundary condition are unique.
\item Suppose that the boundary $\partial M$ is empty and that for each $j=1,\dots, r$, $a_j^{-1}(0)$ is a measure $0$ set and $\log a_j$ is integranle. Then there exists a sequence $(t_i)_{i\in \N}$ going to infinity such that $\xi_{t_i}$ converges in $C^\infty$ to a smooth solution $\xi$ 
of equation (\ref{KW}). Furthermore, the solution of equation (\ref{KW}) is unique.
\end{enumerate}
}
\end{theo}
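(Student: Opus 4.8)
The plan is to treat (\ref{heat equation}) as the gradient flow, on the affine space of maps with $\xi|_{\partial M}=\eta|_{\partial M}$, of the convex functional
\begin{equation}
\mathcal{E}(\xi)=\int_M\Big(\tfrac12|\nabla\xi|^2+\sum_{j=1}^r a_je^{(v_j,\xi)}-(w,\xi)\Big)\vol ,
\end{equation}
whose convexity (from that of $t\mapsto e^t$ and of $\xi\mapsto|\nabla\xi|^2$) is the source of all the uniqueness statements and of the coercivity used for convergence; here $\Delta_{g_M}$ is the non-negative Laplacian, so $\partial_t+\Delta_{g_M}$ is parabolic. First I would establish short-time existence: (\ref{heat equation}) is a semilinear parabolic system for the $V$-valued $\xi_t$ with smooth zeroth-order nonlinearity $\sum_j a_j(x)e^{(v_j,\xi)}v_j-w(x)$, so standard parabolic theory gives a unique solution on a maximal interval, smooth for $t>0$ and continuous up to $t=0$ (zeroth-order compatibility at the corner $\partial M\times\{0\}$ suffices for continuity there; one may also mollify $\eta$ to fully compatible data and pass to the limit with the bounds below). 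To make the solution global I would bound $|\xi_t|$ on finite time intervals: pairing (\ref{heat equation}) with $\xi_t$ and using $(\Delta_{g_M}\xi,\xi)=\Delta_{g_M}(\tfrac12|\xi|^2)+|\nabla\xi|^2$ together with $-te^t\le e^{-1}$ (hence $-\sum_j a_je^{(v_j,\xi)}(v_j,\xi)\le e^{-1}\sum_j a_j$) yields $\partial_t|\xi_t|^2+\Delta_{g_M}|\xi_t|^2\le|\xi_t|^2+C$, and comparison with the spatially constant supersolution $(K+C)e^t-C$, $K:=\sup_M|\eta|^2$, gives $\sup_{M\times[0,T]}|\xi_t|<\infty$ for every $T$. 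Since the $L^\infty$ norm cannot blow up in finite time, interior (and, along $\partial M$, boundary, the Dirichlet data being smooth and time independent) $L^p$ and Schauder estimates bootstrap the now-bounded nonlinearity to $C^\infty$ bounds on $[\delta,T]\times M$, so the solution exists on $[0,\infty)$.

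For the long-time behaviour, $\tfrac{d}{dt}\mathcal{E}(\xi_t)=-\|\partial_t\xi_t\|_{L^2}^2\le0$ for $t>0$, so $\mathcal{E}(\xi_t)$ is non-increasing and, once coercivity gives $\inf\mathcal{E}>-\infty$, both $\int_1^\infty\|\partial_t\xi_t\|_{L^2}^2\,dt<\infty$ and $\sup_{t\ge1}\int_M\sum_j a_je^{(v_j,\xi_t)}\vol<\infty$. The crucial step is a uniform-in-$t$ $C^0$ bound. In case (i), $\xi-\eta\in H^1_0$, so the Poincaré inequality makes $\mathcal{E}$ coercive on the Dirichlet class, giving a uniform $H^1$ bound, which I would then promote to a uniform $C^0$ bound by replacing the time-growing barrier above with a stationary one anchored at the boundary data. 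In case (ii), the assumptions that $a_j^{-1}(0)$ has measure zero and $\log a_j\in L^1$ are exactly what prevents the terms $\int_M a_je^{(v_j,\xi)}\vol$ from degenerating as $\xi$ varies in $V$; combined with the fact that $v_1,\dots,v_r$ positively span $V$ (so $\sum_j e^{(v_j,\xi)}$ grows exponentially in every direction of $V$), this yields coercivity of $\mathcal{E}$ on $H^1(M,V)$, hence a uniform $H^1$ bound and, with the bound above, a uniform $C^0$ bound. Uniform $C^0$ control plus parabolic estimates give uniform $C^\infty_{\mathrm{loc}}$ bounds for $t\ge1$; choosing $t_i\to\infty$ with $\|\partial_t\xi_{t_i}\|_{L^2}\to0$ and extracting a $C^\infty$-convergent subsequence produces a limit $\xi$ with $\partial_t\xi_{t_i}\to0$, so $\xi$ solves (\ref{KW}), with $\xi|_{\partial M}=\eta|_{\partial M}$ in case (i).

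Uniqueness is the soft part. If $\xi_t,\xi'_t$ both solve (\ref{heat equation})--(\ref{boundary}), then $\zeta_t:=\xi_t-\xi'_t$ vanishes on $\partial M$ and
\begin{equation}
\frac{d}{dt}\,\tfrac12\|\zeta_t\|_{L^2}^2=-\|\nabla\zeta_t\|_{L^2}^2-\int_M\sum_{j=1}^r a_j\big(e^{(v_j,\xi_t)}-e^{(v_j,\xi'_t)}\big)(v_j,\zeta_t)\,\vol\le0
\end{equation}
because $(e^a-e^b)(a-b)\ge0$; since $\zeta_0=0$, $\zeta_t\equiv0$. The same identity with the $t$-derivative removed shows that two solutions $\xi,\xi'$ of (\ref{KW}) with $\xi-\xi'$ vanishing on $\partial M$ (automatic when $\partial M=\emptyset$) satisfy $\nabla(\xi-\xi')\equiv0$ and $a_j\big(e^{(v_j,\xi)}-e^{(v_j,\xi')}\big)(v_j,\xi-\xi')\equiv0$ for all $j$; in case (i) the boundary condition forces $\xi-\xi'\equiv0$, while in case (ii), writing $\xi-\xi'\equiv c$, the identity $(1-e^t)t=0\iff t=0$ together with $a_j>0$ almost everywhere gives $(v_j,c)=0$ for every $j$, hence $c=0$ since the $v_j$ span $V$.

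I expect the main obstacle to be the uniform-in-$t$ $C^0$ (equivalently $C^\infty_{\mathrm{loc}}$) estimate: the zeroth-order part of (\ref{KW}) is of affine-Toda type, whose cross terms block any naive maximum-principle argument, so one must pass through the variational structure, and this is precisely where the Dirichlet condition of case (i), and the conditions that $a_j^{-1}(0)$ have measure zero and $\log a_j$ be integrable in case (ii), are actually used. Everything else — short-time existence, the finite-time barrier, the energy monotonicity, and all the uniqueness assertions — is either the convexity bookkeeping above or standard semilinear parabolic regularity.
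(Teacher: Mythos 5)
Your overall architecture (short-time existence, a finite-time barrier for global existence, energy monotonicity for convergence, convexity for uniqueness) is sound, and several pieces are correct and even cleaner than the paper's: the finite-time $C^0$ bound via $-te^t\le e^{-1}$ and a growing supersolution, and the uniqueness arguments (the integral form of the convexity identity is equivalent to the paper's pointwise use of $\frac12(\frac{d}{dt}+\Delta_{g_M})|\xi_t-\xi_t'|^2\le 0$ plus the maximum principle, and your case (ii) elliptic uniqueness is exactly the argument behind \cite[Theorem 1]{Miy1}). The functional $\mathcal{E}$ is indeed the one the paper uses in case (ii).

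The genuine gap is at the step you yourself flag as the main obstacle: the uniform-in-$t$ $C^0$ bound. Your coercivity argument (Poincar\'e in case (i), the $\log a_j\in L^1$ hypothesis in case (ii)) only delivers a uniform $H^1$, respectively $L^2$, bound, and on a manifold of general dimension an $H^1$ bound gives no control of the exponential nonlinearity; the promotion to $C^0$ is asserted but not supplied. The proposed ``stationary barrier'' for $\partial_t|\xi|^2+\Delta_{g_M}|\xi|^2\le|\xi|^2+C$ does not obviously exist, because the zeroth-order term $+|\xi|^2$ has the wrong sign for the classical maximum principle (one would at least need to arrange its coefficient below the first Dirichlet eigenvalue, and no such barrier is available at all when $\partial M=\emptyset$). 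The paper closes this gap by two mechanisms you do not use. First, the pointwise inequality $\frac12(\frac{d}{dt}+\Delta_{g_M})|F(\xi_t)|^2\le 0$ for $F(\xi_t)=-\frac{d\xi_t}{dt}$ (Lemma \ref{lemma2}): in case (i), since $F(\xi_t)|_{\partial M}=0$, Donaldson's decay lemma gives $\sup_M|F(\xi_t)|\le Ce^{-\mu t}$, hence $\sup_M|\frac{d\xi_t}{dt}|$ is integrable in $t$ and $\sup_M|\xi_t|$ is bounded outright — no coercivity needed. Second, in case (ii), the passage from the $L^2$ bound to a sup bound is the L\"ubke--Teleman mean-value (Moser) inequality applied to the subsolution estimate $\Delta_{g_M}|\xi_t|^2\le C(|\xi_t|+1)$, which itself requires the uniform bound on $\sup_M|F(\xi_t)|$ coming from Lemma \ref{lemma2} and the maximum principle. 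Without one of these inputs (or an equivalent), your argument does not close; with them, the rest of your bootstrap and subsequence extraction goes through as you describe.
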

\begin{theo}\label{main theorem 3}{\it Let $\xi,\xi^\prime:M\rightarrow V$ be $V$-valued $C^2$-functions. Then the following holds:
\begin{align*}
&\Delta_{g_M}\log|\sum_{j=1}^re^{(\xi-\xi^\prime,u_j)}| \\
\leq &|\Delta_{g_M}\xi+\sum_{j=1}^ra_je^{(v_j,\xi)}v_j-w|+|\Delta_{g_M}\xi^\prime+\sum_{j=1}^ra_je^{(v_j,\xi^\prime)}v_j-w|.
\end{align*}
}
\end{theo}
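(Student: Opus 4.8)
The plan is to prove the asserted inequality pointwise on $M$ by a direct second-order computation, reminiscent of the classical Donaldson--Simpson comparison estimate for solutions of the Hermitian-Einstein equation; the only new ingredient is an elementary sign computation adapted to the vectors $v_j$. Write $\zeta\coloneqq\xi-\xi'$ and $g_j\coloneqq(\zeta,u_j)$, so that $\sum_{j=1}^r g_j=0$ (as $\zeta\in V$) and $f\coloneqq\sum_{j=1}^r e^{g_j}$ is a strictly positive $C^2$ function with $\log\bigl|\sum_j e^{(\xi-\xi',u_j)}\bigr|=\log f$. Introduce the pointwise weights $p_j\coloneqq e^{g_j}/f$, which are positive and satisfy $\sum_j p_j=1$, and set $p\coloneqq(p_1,\dots,p_r)\in\R^r$.

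First I would record the identities $\nabla\log f=\sum_j p_j\nabla g_j$ and $\nabla p_j=p_j\bigl(\nabla g_j-\nabla\log f\bigr)$, from which a direct computation --- using that $\Delta_{g_M}$ is the nonnegative geometric Laplacian $-\div\nabla$, the sign under which the heat equation \eqref{heat equation} is forward parabolic --- gives the pointwise identity
\[
\Delta_{g_M}\log f=\sum_{j=1}^r p_j\,\Delta_{g_M}g_j-\sum_{j=1}^r p_j\,|\nabla g_j-\nabla\log f|^2 .
\]
Since $p_j\ge 0$, the last sum is $\ge 0$, and since $g_j=(\zeta,u_j)$ this gives $\Delta_{g_M}\log f\le\sum_j p_j\,\Delta_{g_M}g_j=(\Delta_{g_M}\zeta,p)$.

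Next I would substitute the two equations. Write $R\coloneqq\Delta_{g_M}\xi+\sum_k a_k e^{(v_k,\xi)}v_k-w$ and $R'\coloneqq\Delta_{g_M}\xi'+\sum_k a_k e^{(v_k,\xi')}v_k-w$ for the two residuals appearing on the right-hand side of the statement, so that $\Delta_{g_M}\zeta=R-R'-\sum_k a_k\bigl(e^{(v_k,\xi)}-e^{(v_k,\xi')}\bigr)v_k$ and hence
\[
\Delta_{g_M}\log f\le(R-R',p)-\sum_{k=1}^r a_k\bigl(e^{(v_k,\xi)}-e^{(v_k,\xi')}\bigr)(v_k,p) .
\]
The crux --- and the step where the precise form $v_k=u_{k+1}-u_k$ (indices read cyclically) is used --- is that this last sum is nonnegative. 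Indeed $(v_k,p)=p_{k+1}-p_k=\tfrac1f\bigl(e^{g_{k+1}}-e^{g_k}\bigr)$ has the same sign as $g_{k+1}-g_k=(v_k,\zeta)=(v_k,\xi)-(v_k,\xi')$, and $e^{(v_k,\xi)}-e^{(v_k,\xi')}$ has that same sign as well; thus the two factors of the $k$-th summand share a sign, and since $a_k\ge 0$ the whole sum is $\ge 0$. Discarding it leaves $\Delta_{g_M}\log f\le(R-R',p)$.

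Finally, $0\le p_j\le 1$ forces $|p|^2=\sum_j p_j^2\le\sum_j p_j=1$, so Cauchy--Schwarz gives $(R-R',p)\le|R-R'|\,|p|\le|R|+|R'|$, which is exactly the claimed bound. I expect the only real obstacle to be pinning down the sign of the cross term $\sum_k a_k(e^{(v_k,\xi)}-e^{(v_k,\xi')})(v_k,p)$; once the monotonicity observation above is extracted, the remaining steps are routine differentiation and a Cauchy--Schwarz estimate.
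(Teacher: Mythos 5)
Your proposal is correct and follows essentially the same route as the paper: the paper's computation with $\varphi=\sum_j e^{f_j/2}u_j$ and $\widetilde{\varphi}=\sum_j e^{f_j}u_j$ is exactly your weighted-average inequality $\Delta_{g_M}\log f\leq\sum_j p_j\Delta_{g_M}g_j$ in different notation, and both arguments then hinge on the same two points — the cross term $\sum_k a_k(e^{(v_k,\xi)}-e^{(v_k,\xi')})(v_k,p)$ is nonnegative because both factors share the sign of $(v_k,\xi-\xi')$, and $|p|\leq 1$ (equivalently $|\widetilde{\varphi}|\leq|\varphi|^2$) closes the Cauchy--Schwarz estimate. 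No gaps.
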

\begin{theo}\label{main theorem 4}{\it For each $j=1,\dots, r$, suppose that $a_j$ is a $C^2$-function, and that the following holds on $M\backslash a_j^{-1}(0)$.
\begin{align*}
\Delta_{g_M}\log (a_j)\leq -(v_j,w).
\end{align*}
Let $\xi:M\rightarrow V$ be a $C^2$-solution of (\ref{KW}). Then the following holds on $M\backslash\bigcup_{j=1}^ra_j^{-1}(0)$:
\begin{align*}
\Delta_{g_M}\log\bigl(\sum_{j=1}^ra_je^{(v_j,\xi)}\bigr)\leq -\frac{\left|\sum_{j=1}^ra_je^{(v_j,\xi)}v_j\right|^2}{\left|\sum_{j=1}^ra_je^{(v_j,\xi)}\right|}.
\end{align*}
}
\end{theo}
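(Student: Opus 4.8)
The plan is to carry out a direct pointwise computation on $M\backslash\bigcup_{j=1}^r a_j^{-1}(0)$ that exploits the logarithmic structure of (\ref{KW}). Write $f_j\coloneqq a_je^{(v_j,\xi)}$ for $j=1,\dots,r$, and set $S\coloneqq\sum_{j=1}^r f_j$ and $P\coloneqq\sum_{j=1}^r f_jv_j\in V$, so that the quantity to estimate is $\log S$, the claimed bound is $\Delta_{g_M}\log S\le -|P|^2/S$, and (\ref{KW}) reads exactly $\Delta_{g_M}\xi=w-P$. On the open set $M\backslash\bigcup_j a_j^{-1}(0)$ each $a_j$, hence each $f_j$ and $S$, is strictly positive, and the $C^2$ regularity of the $a_j$ and of $\xi$ makes $f_j$, $S$ and $\log S$ of class $C^2$ there, so every manipulation below is legitimate. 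Throughout I use the identity $\Delta_{g_M}\log\varphi=\dfrac{\Delta_{g_M}\varphi}{\varphi}+\dfrac{|\nabla\varphi|^2}{\varphi^2}$ for a positive $C^2$ function $\varphi$.

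First I would differentiate $\log f_j=\log a_j+(v_j,\xi)$: applying the identity above to $\varphi=f_j$ and substituting $(v_j,\Delta_{g_M}\xi)=(v_j,w)-(v_j,P)$ gives
\begin{align*}
\Delta_{g_M}f_j=f_j\bigl(\Delta_{g_M}\log a_j+(v_j,w)\bigr)-f_j(v_j,P)-\frac{|\nabla f_j|^2}{f_j}.
\end{align*}
Summing over $j$, and using $\sum_{j=1}^r f_j(v_j,P)=(P,P)=|P|^2$, yields
\begin{align*}
\Delta_{g_M}S=\sum_{j=1}^r f_j\bigl(\Delta_{g_M}\log a_j+(v_j,w)\bigr)-|P|^2-\sum_{j=1}^r\frac{|\nabla f_j|^2}{f_j}.
\end{align*}
The hypothesis $\Delta_{g_M}\log a_j\le -(v_j,w)$, together with $f_j>0$, makes the first sum $\le 0$, so $\Delta_{g_M}S\le -|P|^2-\sum_{j=1}^r|\nabla f_j|^2/f_j$.

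To finish, apply the logarithmic identity once more, to $\varphi=S$:
\begin{align*}
\Delta_{g_M}\log S=\frac{\Delta_{g_M}S}{S}+\frac{|\nabla S|^2}{S^2}\le -\frac{|P|^2}{S}+\frac1{S^2}\Bigl(|\nabla S|^2-S\sum_{j=1}^r\frac{|\nabla f_j|^2}{f_j}\Bigr).
\end{align*}
Since $\nabla S=\sum_{j=1}^r\nabla f_j$, the weighted Cauchy--Schwarz inequality $\bigl|\sum_j\nabla f_j\bigr|^2\le\bigl(\sum_j f_j\bigr)\bigl(\sum_j|\nabla f_j|^2/f_j\bigr)$ shows the bracketed term is $\le 0$; hence $\Delta_{g_M}\log S\le -|P|^2/S$. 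Since $\bigl|\sum_{j=1}^r a_je^{(v_j,\xi)}\bigr|=S$, this is precisely the assertion.

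I do not expect a genuine obstacle here: the argument is mechanical and purely pointwise on $M\backslash\bigcup_j a_j^{-1}(0)$, with no global or functional-analytic input. The only points requiring care are keeping the correct sign in the identity $\Delta_{g_M}\log\varphi=\Delta_{g_M}\varphi/\varphi+|\nabla\varphi|^2/\varphi^2$ — chosen so that the gradient terms $|\nabla f_j|^2/f_j$ and $|\nabla S|^2/S^2$ enter with the signs that let the Cauchy--Schwarz step absorb them — and verifying that $\sum_j f_j(v_j,P)$ collapses to $|P|^2$ for the inner product used on $V$.
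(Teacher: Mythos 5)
Your proof is correct and follows essentially the same route as the paper: both arguments reduce to the pointwise convexity inequality $\Delta_{g_M}\log\bigl(\sum_j e^{f_j}\bigr)\leq\bigl(\sum_je^{f_j}\bigr)^{-1}\sum_je^{f_j}\Delta_{g_M}f_j$ for $f_j=\log a_j+(v_j,\xi)$, which the paper obtains via the auxiliary vector $\varphi=\sum_je^{f_j/2}u_j$ and the Cauchy--Schwarz step $|(d\varphi,\varphi)|^2\leq|d\varphi|^2|\varphi|^2$ from the proof of Theorem \ref{main theorem 3}, and which you obtain equivalently by the identity $\Delta_{g_M}\log u=\Delta_{g_M}u/u+|du|^2/u^2$ together with the weighted Cauchy--Schwarz inequality $|\sum_j\nabla f_j|^2\leq(\sum_jf_j)(\sum_j|\nabla f_j|^2/f_j)$. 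The remaining substitution of (\ref{KW}) and of the hypothesis $\Delta_{g_M}\log a_j\leq-(v_j,w)$ is identical in both, and your sign conventions match the paper's geometric (nonnegative) Laplacian.
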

\begin{rem} A function on a manifold with boundary is said to be {\it smooth} if it is smooth on $M\backslash \partial M$, and if it has a smooth extension at each point on the boundary.
\end{rem}
\begin{rem} All of the above theorems can easily be generalized to theorems for the more generalized equation investigated in \cite{Miy1, Miy3}.
\end{rem}
Equation (\ref{KW}) is a generalization of the Hermitian-Einstein equation for diagonal harmonic metrics on cyclic Higgs bundles \cite{Bar1, DL1}. For cyclic Higgs bundles, Theorem \ref{main theorem 1} follows from \cite{Don1, Sim1}, Theorem \ref{main theorem 3} follows from \cite[Lemma 3.1]{Sim1}, and Theorem \ref{main theorem 4} follows from \cite[proof of Lemma 10.1]{Sim1} (see also \cite{LM1, LM2}). These are very fundamental, and important theorems for Higgs bundles. We briefly recall the definition of cyclic Higgs bundles and their Hermitian-Einstein equation. Let $X$ be a connected Riemann surface with the canonical bundle $K_X\rightarrow X$. 
We take a square root $K_X^{1/2}\rightarrow X$ and we define a holomorphic vector bundle $E\rightarrow X$ of rank $r$ as $E\coloneqq K_X^{(r-1)/2}\oplus K_X^{(r-3)/2}\oplus\cdots\oplus K_X^{-(r-3)/2}\oplus K_X^{-(r-1)/2}$. We take a $q\in H^0(K_X^r)$. We set $\Phi(q)_{j+1,j}$ to equal 1 for each $j=1,\dots, r-1$, and $\Phi(q)_{1,r}$ to equal $q$. We define $\Phi(q)\in H^0(\End E\otimes K_X)$ as $\Phi(q)\coloneqq \sum_{j=1}^{r-1}\Phi(q)_{j+1,j}+\Phi(q)_{1,r}$, where $\Phi(q)_{i,j}$ is considered to be the $(i,j)$-component of $\Phi(q)$, and $1$ (resp. $q$) is considered to be a $K_X^{-1}$ (resp. $K_X^{r-1}$)-valued holomorphic 1-form. We call $(E,\Phi(q))$ a cyclic Higgs bundle (see \cite{DL1} for a more generalization). We take a diagonal Hermitian metric $h=(h_1,\dots, h_r)$ on $E$ with curvature $F_h$ such that $\det(h)=1$. We also take a K\"ahler form $\omega_X$. We denote by $\Lambda_{\omega_X}$ the dual of $\omega_X\wedge$, and by $\Delta_{\omega_X}$ the geometric Laplacian. 
The following PDE for a $V$-valued function $\xi=(f_1,\dots, f_r): X\rightarrow V$ is the Hermitian-Einstein equation of the metric $(e^{f_1}h_1,\dots, e^{f_r}h_r)$ on $(E,\Phi(q))$:
\begin{align}
\Delta_{\omega_X}\xi+\sum_{j=1}^r4k_je^{(v_j,\xi)}v_j=-2\inum \Lambda_{\omega_X}F_{h}, \label{HEeq}
\end{align}
where $\R^r$ is identified with diagonal matrices, and we denote by $k_1,\dots, k_r$ the positive functions defined by $k_j\coloneqq |1|_{h,\omega_X}^2 \ (j=1,\dots, r-1), \ k_r\coloneqq |q|_{h,\omega_X}^2$. Equation (\ref{HEeq}) is also called Toda lattice with opposite sign (see \cite{GL1}). Equation (\ref{KW}) is a generalization of (\ref{HEeq}). Equation (\ref{KW}) also includes the following examples: 

\begin{ex}[Generalization by using subharmonic functions]\label{ex3} We consider the case where $X$ is a domain of $\C$ for simplicity. Let $\omega_X$ be the restriction of the standard K\"ahler form on $\C$ to $X$, and $h=(h_1,\dots, h_r)$ the metric on $E$ induced by $\omega_X$. Let $f:X\rightarrow \C$ be a holomorphic function and we set $q\coloneqq f(dz)^r$. We normalize the metric $h$ so that $|q|_{h,\omega_X}^2=|f|^2$. We consider a generalization of (\ref{HEeq}) obtained by replacing $|f|^2=e^{\log|f|^2}$ by $e^{\eta}$ with an arbitrary subharmonic function $\eta: X\rightarrow \R$ (cf. \cite{Ran1}):
\begin{align}
\Delta_{\omega_X}\xi+\sum_{j=1}^r4k_j^\prime e^{(v_j,\xi)}v_j=0, \label{HEeq2}
\end{align}
where $k_j^\prime$ $(j=1,\dots, r)$ are defined as $k_j^\prime\coloneqq |1|_{h,\omega_X}^2$ $(j=1,\dots, r-1)$,  $k_r^\prime\coloneqq e^\eta$. Consider the case that $\eta=\frac{1}{N}\log|f|^2$ for some $N\in\Z_{>0}$ and some holomorphic function $f:X\rightarrow \C$. Then on a subdomain $D\subseteq X$ such that we can choose a well-defined single-valued function $f^{1/N}$, equation (\ref{HEeq2}) is the Hermitian-Einstein equation for a cyclic Higgs bundle associated with $q=f^{\frac{1}{N}}(dz)^r$, and a solution of equation (\ref{HEeq2}) gives a harmonic bundle on $D$. Let $(\eta_j=\frac{1}{N_j}\log|f_j|^2)_{j\in\N}$ be a sequence of subharmonic functions with $N_j\in\Z_{>0}$ and holomorphic functions $f_j:X\rightarrow \C$ $(j=1,2, \dots, )$. If $\eta$ is obtained as a limit of a sequence $(\eta_j=\frac{1}{N_j}\log|f_j|^2)_{j\in\N}$ with respect to some topology on the space of subharmonic functions, then equation (\ref{HEeq2}) associated with $\eta$ can be considered to be a limit of equations (\ref{HEeq2}) associated with $\eta_j=\frac{1}{N}\log|f_j|^2$ $(j=1,2,\dots, )$. The above sentence ``equation (\ref{HEeq2}) associated with $\eta$ can be considered to be a limit of equations (\ref{HEeq2}) associated with $\eta_j=\frac{1}{N}\log|f_j|^2$ $(j=1,2,\dots, )$'' is not a rigorous statement, but the author hopes that this statement will be a mathematically rigorous claim.

\end{ex}
\begin{ex} [Perturbation by currents]\label{ex1} Let $T=(T_1,\dots, T_r)$ be a $V$-valued real current on $X$. We consider equation (\ref{HEeq}) perturbed by $T$:
\begin{align}
\Delta_{\omega_X}\xi+\sum_{j=1}^r4k_je^{(v_j,\xi)}v_j=-2\inum \Lambda_{\omega_X}F_{h_j}+T_j\label{HEeqT}
\end{align}
On an open subset $U\subseteq X$ such that $T\left.\right|_U=0$, (\ref{HEeqT}) is the usual Hermitian-Einstein equation, and we obtain a harmonic bundle on $U$ from a solution of (\ref{HEeqT}). 
\end{ex}

\begin{ex}[Cyclic Higgs bundles with non-holomorphic Higgs fields]\label{ex2} Let $q$ be a non-holomorphic section of $K_X^r\rightarrow X$. We consider equation (\ref{HEeq}) for a non-holomorphic $q$. On an open subset $U\subseteq X$ such that $\bar{\partial}q=0$, a solution of the Hermitian-Einstein equation gives a harmonic bundle (see also \cite{Miy4}). 
\end{ex}

We can also consider the combination of the above examples. The above generalization can also be considered for the more generalized cyclic Higgs bundles \cite{DL1}, and for cyclic Higgs bundles on real 3-dimensional manifolds with transverse complex structures (see \cite{Miy3}). 



\section{Proof}
Proof of Theorem \ref{main theorem 1} heavily relies on the techniques used in \cite{ Don0, Don1, Ham1, Sim1}. We prepare some lemmas. 
\begin{lemm}\label{lemma1}{\it Let $(\xi_t)_{t\in I}$ and $(\xi_t^\prime)_{t\in I}$ be solutions of (\ref{heat equation}) defined on an interval $I$. Then the following holds:
\begin{align*}
\frac{1}{2}\left(\frac{d}{dt}+\Delta_{g_M}\right)|\xi_t-\xi_t^\prime|^2\leq -\sum_{j=1}^ra_j(e^{(v_j,\xi_t)}v_j-e^{(v_j,\xi_t^\prime)}v_j,\xi_t-\xi_t^\prime)\leq 0.
\end{align*}
}
\end{lemm}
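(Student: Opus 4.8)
The plan is to compute $\left(\frac{d}{dt}+\Delta_{g_M}\right)|\xi_t-\xi_t^\prime|^2$ directly by expanding the square and invoking the two heat equations. Writing $\psi_t\coloneqq \xi_t-\xi_t^\prime$, the standard Bochner-type identity gives $\frac12\left(\frac{d}{dt}+\Delta_{g_M}\right)|\psi_t|^2 = \left(\left(\frac{d}{dt}+\Delta_{g_M}\right)\psi_t,\psi_t\right) - |\nabla \psi_t|^2$, where the inner product and norm are those on $V$ and the gradient term arises from the second-order part of $\Delta_{g_M}|\psi_t|^2$. (I should be careful about the sign convention: with the geometric Laplacian $\Delta_{g_M}=-\div\nabla$ being nonnegative, $\Delta_{g_M}|\psi|^2 = 2(\Delta_{g_M}\psi,\psi) - 2|\nabla\psi|^2$, so the $-|\nabla\psi_t|^2$ term is indeed favorably signed.) Since $|\nabla\psi_t|^2\geq 0$, it suffices to bound the first term.

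Next I would substitute the heat equation. From \eqref{heat equation}, $\left(\frac{d}{dt}+\Delta_{g_M}\right)\xi_t = -\sum_{j=1}^r a_j e^{(v_j,\xi_t)}v_j + w$ and likewise for $\xi_t^\prime$, so $\left(\frac{d}{dt}+\Delta_{g_M}\right)\psi_t = -\sum_{j=1}^r a_j\bigl(e^{(v_j,\xi_t)} - e^{(v_j,\xi_t^\prime)}\bigr)v_j$; the $w$ terms cancel. Pairing with $\psi_t$ yields exactly $-\sum_{j=1}^r a_j\bigl(e^{(v_j,\xi_t)}v_j - e^{(v_j,\xi_t^\prime)}v_j,\,\psi_t\bigr)$, which is the middle expression in the claim. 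Discarding $-|\nabla\psi_t|^2\leq 0$ gives the first inequality.

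For the second inequality I would argue termwise. Fix $j$ and a point of $M$. Since $a_j\geq 0$, it is enough to show $\bigl(e^{(v_j,\xi_t)}v_j - e^{(v_j,\xi_t^\prime)}v_j,\,\xi_t-\xi_t^\prime\bigr)\geq 0$, i.e. $\bigl(e^{(v_j,\xi_t)} - e^{(v_j,\xi_t^\prime)}\bigr)\,(v_j,\xi_t-\xi_t^\prime)\geq 0$. Setting $s\coloneqq (v_j,\xi_t)$ and $s^\prime\coloneqq (v_j,\xi_t^\prime)$, this is $(e^s - e^{s^\prime})(s-s^\prime)\geq 0$, which holds because $x\mapsto e^x$ is increasing on $\R$. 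Summing over $j$ and multiplying by the nonnegative weights $a_j$ gives the second inequality, and combining with the previous step completes the proof.

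I do not expect any serious obstacle here; this is a routine computation. The only point requiring mild care is pinning down the sign convention for the geometric Laplacian so that the $-|\nabla\psi_t|^2$ term has the sign that lets it be dropped — with the analysts' nonnegative convention (as used for $\Delta_{g_M}$ acting on functions throughout the paper) everything works as written. The monotonicity argument for the second inequality is elementary and, notably, uses only $a_j\geq 0$, not that $a_j$ is not identically zero, so it applies under the hypotheses in force.
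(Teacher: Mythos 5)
Your proposal is correct and follows essentially the same route as the paper: the identity $\tfrac12(\tfrac{d}{dt}+\Delta_{g_M})|\psi_t|^2=((\tfrac{d}{dt}+\Delta_{g_M})\psi_t,\psi_t)-|d\psi_t|^2$, discarding the gradient term, substituting the heat equation, and concluding nonpositivity from the monotonicity of $x\mapsto e^x$. Your write-up is in fact slightly more explicit than the paper's, which leaves the final monotonicity step implicit.
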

\begin{proof} By calculation, we have
\begin{align*}
&\frac{1}{2}\left(\frac{d}{dt}+\Delta_{g_M}\right)|\xi_t-\xi_t^\prime|^2 \\
=&\left(\left\{\frac{d}{dt}+\Delta_{g_M}\right\}(\xi_t-\xi_t^\prime), \xi_t-\xi_t^\prime\right) -|d(\xi_t-\xi_t^\prime)|^2 \\
\leq & -\sum_{j=1}^ra_j(e^{(v_j,\xi_t)}v_j-e^{(v_j,\xi_t^\prime)}v_j,\xi_t-\xi_t^\prime) \\
\leq & 0
\end{align*}
Then we have the result.
\end{proof}
As a special case of Lemma \ref{lemma1}, we have
\begin{lemm}\label{xi-}{\it Let $\xi$ and $\xi^\prime$ be solutions of (\ref{KW}). Then the following holds:
\begin{align*}
\frac{1}{2}\Delta_{g_M}|\xi-\xi^\prime|^2\leq -\sum_{j=1}^ra_j(e^{(v_j,\xi)}v_j-e^{(v_j,\xi^\prime)}v_j,\xi-\xi^\prime)\leq 0.
\end{align*}
}
\end{lemm}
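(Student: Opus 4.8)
The plan is to deduce this directly from Lemma~\ref{lemma1} by viewing a solution of (\ref{KW}) as a stationary solution of the heat equation. Concretely, if $\xi$ solves (\ref{KW}), then the constant one-parameter family $\xi_t\equiv\xi$ satisfies (\ref{heat equation}): indeed $d\xi_t/dt=0$, and $\Delta_{g_M}\xi_t+\sum_{j=1}^r a_je^{(v_j,\xi_t)}v_j-w=\Delta_{g_M}\xi+\sum_{j=1}^r a_je^{(v_j,\xi)}v_j-w=0$ by (\ref{KW}); likewise $\xi'_t\equiv\xi'$ solves (\ref{heat equation}). Applying Lemma~\ref{lemma1} to these two families, the term $\frac{d}{dt}|\xi_t-\xi'_t|^2$ vanishes because $|\xi_t-\xi'_t|^2=|\xi-\xi'|^2$ is independent of $t$, and what remains is exactly the asserted chain of inequalities.

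Alternatively one can argue in one line without invoking the heat equation at all. Using the identity $\frac12\Delta_{g_M}|\zeta|^2=(\Delta_{g_M}\zeta,\zeta)-|d\zeta|^2$ with $\zeta=\xi-\xi'$, and substituting $\Delta_{g_M}\xi=w-\sum_j a_je^{(v_j,\xi)}v_j$ together with the analogous expression for $\xi'$, one obtains
\[
\frac12\Delta_{g_M}|\xi-\xi'|^2=-\sum_{j=1}^r a_j\bigl(e^{(v_j,\xi)}v_j-e^{(v_j,\xi')}v_j,\ \xi-\xi'\bigr)-|d(\xi-\xi')|^2,
\]
which is $\le-\sum_{j=1}^r a_j(e^{(v_j,\xi)}v_j-e^{(v_j,\xi')}v_j,\xi-\xi')$ since $|d(\xi-\xi')|^2\ge0$. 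This gives the first inequality.

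For the second inequality it suffices to check that each summand is non-negative. Fix $j$ and set $s\coloneqq(v_j,\xi)$, $s'\coloneqq(v_j,\xi')$; then $(e^{(v_j,\xi)}v_j-e^{(v_j,\xi')}v_j,\xi-\xi')=(e^{s}-e^{s'})(s-s')\ge0$ because $x\mapsto e^x$ is monotone increasing. Since $a_j\ge0$, summing over $j$ yields $\sum_{j=1}^r a_j(e^{(v_j,\xi)}v_j-e^{(v_j,\xi')}v_j,\xi-\xi')\ge0$, so the right-hand side is $\le0$. There is no genuine obstacle here; the only point worth isolating is the termwise monotonicity argument, which is precisely the mechanism behind the ``$\le0$'' already appearing in Lemma~\ref{lemma1}.
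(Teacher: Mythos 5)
Your proposal is correct and matches the paper, which derives this lemma simply as the special case of Lemma~\ref{lemma1} obtained by taking $\xi_t\equiv\xi$ and $\xi'_t\equiv\xi'$ as stationary solutions of (\ref{heat equation}). Your termwise monotonicity argument $(e^{s}-e^{s'})(s-s')\geq 0$ correctly fills in the ``$\leq 0$'' step that the paper leaves implicit in the proof of Lemma~\ref{lemma1}.
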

\begin{rem}
For the proof of Theorem \ref{main theorem 1}, for $V$-valued functions $\xi$ and $\xi^\prime$, we use the above function $|\xi-\xi^\prime|^2$ instead of the following function used in \cite{Don0, Don1, Sim1}:
\begin{align*}
\sigma(\xi,\xi^\prime)\coloneqq \sum_{j=1}^r(e^{(\xi-\xi^\prime,u_j)}+e^{(\xi^\prime-\xi,u_j)})-2r.
\end{align*} 
Note, however, that all arguments in the proof of Theorem \ref{main theorem 1} can be done just as well using $\sigma(\xi,\xi^\prime)$ instead of $|\xi-\xi^\prime|^2$.
\end{rem}

\begin{defi}\label{F, G}
For a $V$-valued function $\xi$, we set 
\begin{align*}
&F(\xi)\coloneqq \Delta_{g_M}\xi+\sum_{j=1}^ra_je^{(v_j,\xi)}v_j-w.
\end{align*}
\end{defi}
The following holds:
\begin{lemm}\label{lemma2} {\it Let $(\xi_t)_{t\in I}$ be a solution of (\ref{heat equation}) defined on an interval $I$. Then the following holds:
\begin{align*}
\frac{1}{2}\left(\frac{d}{dt}+\Delta_{g_M}\right)|F(\xi_t)|^2\leq -\sum_{j=1}^ra_je^{(v_j,\xi_t)}(v_j,F(\xi_t))^2\leq 0.
\end{align*}
}
\end{lemm}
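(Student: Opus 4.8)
The plan is to imitate the proof of Lemma \ref{lemma1}: differentiate $|F(\xi_t)|^2$, apply the pointwise Bochner-type identity, and use the heat equation to re-express the time derivative of $F(\xi_t)$ through $F(\xi_t)$ itself. Concretely, since $V$ carries the Euclidean inner product, the identity $\frac{1}{2}\Delta_{g_M}|u|^2=(\Delta_{g_M}u,u)-|du|^2$ (already used for Lemma \ref{lemma1}) gives
\[
\frac{1}{2}\left(\frac{d}{dt}+\Delta_{g_M}\right)|F(\xi_t)|^2=\left(\left(\frac{d}{dt}+\Delta_{g_M}\right)F(\xi_t),\,F(\xi_t)\right)-|dF(\xi_t)|^2 .
\]

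Next I would compute $\left(\frac{d}{dt}+\Delta_{g_M}\right)F(\xi_t)$ explicitly. Differentiating $F(\xi_t)=\Delta_{g_M}\xi_t+\sum_{j=1}^r a_j e^{(v_j,\xi_t)}v_j-w$ in $t$ (the coefficients $a_j$ and $w$ being $t$-independent) and commuting $\frac{d}{dt}$ past $\Delta_{g_M}$ yields $\frac{d}{dt}F(\xi_t)=\Delta_{g_M}\dot{\xi}_t+\sum_{j=1}^r a_j e^{(v_j,\xi_t)}(v_j,\dot{\xi}_t)v_j$; substituting $\dot{\xi}_t=-F(\xi_t)$ from (\ref{heat equation}) gives
\[
\left(\frac{d}{dt}+\Delta_{g_M}\right)F(\xi_t)=-\sum_{j=1}^r a_j e^{(v_j,\xi_t)}(v_j,F(\xi_t))\,v_j .
\]
Pairing with $F(\xi_t)$ produces $-\sum_{j=1}^r a_j e^{(v_j,\xi_t)}(v_j,F(\xi_t))^2$, so combining with the first display,
\[
\frac{1}{2}\left(\frac{d}{dt}+\Delta_{g_M}\right)|F(\xi_t)|^2=-\sum_{j=1}^r a_j e^{(v_j,\xi_t)}(v_j,F(\xi_t))^2-|dF(\xi_t)|^2 .
\]
Discarding the manifestly nonpositive term $-|dF(\xi_t)|^2$ gives the first asserted inequality, and since each $a_j\geq 0$, $e^{(v_j,\xi_t)}>0$, and $(v_j,F(\xi_t))^2\geq 0$, the remaining sum is $\leq 0$, giving the second.

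The computation is entirely routine, so I do not expect a genuine obstacle; the only point deserving a remark is regularity, namely that a solution $(\xi_t)$ of (\ref{heat equation}) is jointly smooth enough in $(x,t)$ — which holds on $M\times(0,\infty)$ by parabolic regularity, as used throughout the proof of Theorem \ref{main theorem 1} — to legitimize interchanging $\frac{d}{dt}$ with $\Delta_{g_M}$ and differentiating the nonlinear term $a_j e^{(v_j,\xi_t)}v_j$ under the Laplacian.
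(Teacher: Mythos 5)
Your proposal is correct and follows essentially the same route as the paper's proof: the pointwise identity $\tfrac{1}{2}(\tfrac{d}{dt}+\Delta_{g_M})|F(\xi_t)|^2=((\tfrac{d}{dt}+\Delta_{g_M})F(\xi_t),F(\xi_t))-|dF(\xi_t)|^2$, followed by the computation $(\tfrac{d}{dt}+\Delta_{g_M})F(\xi_t)=-\sum_{j=1}^r a_je^{(v_j,\xi_t)}(v_j,F(\xi_t))v_j$ using $\dot{\xi}_t=-F(\xi_t)$. The only cosmetic difference is that you discard the term $-|dF(\xi_t)|^2$ at the end rather than at the start.
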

\begin{proof}
The following holds:
\begin{align*}
&\frac{1}{2}\left(\frac{d}{dt}+\Delta_{g_M}\right)|F(\xi_t)|^2 \\
=&\left(\left\{\frac{d}{dt}+\Delta_{g_M}\right\}F(\xi_t), F(\xi_t)\right) -|dF(\xi_t)|^2 \\
\leq &\left(\left\{\frac{d}{dt}+\Delta_{g_M}\right\}F(\xi_t), F(\xi_t)\right).
\end{align*}
By calculation, we have
\begin{align*}
\left(\frac{d}{dt}+\Delta_{g_M}\right)F(\xi_t) &=\Delta_{g_M}\frac{d\xi_t}{dt}+\frac{d}{dt}\sum_{j=1}^ra_je^{(v_j,\xi_t)}v_j+\Delta_{g_M}F(\xi_t) \\
&=-\sum_{j=1}^ra_je^{(v_j,\xi_t)}(v_j,F(\xi_t))v_j.
\end{align*}
Then we have the claim.
\end{proof}
For each $j=0, 1,2,\dots,$ let $W_j\coloneqq \bigotimes^j T^\ast M$. We denote by $\Delta_{p,W_j}:\Omega^p(W_j)\rightarrow \Omega^p(W_j)$ the Laplacian acting on the space of $p$-forms which takes values in $W_j$. Let $R_j\coloneqq \Delta_{0, W_j}-\Delta_{1,W_{j-1}}:\Omega^0(W_j)\rightarrow \Omega^0(W_j)$, where $\Omega^0(W_j)$ is identified with $\Omega^1(W_{j-1})$ through the natural identification. From the Weitzenb\"ock formula, it follows that $R_j$ is $C^\infty(M,\R)$-linear, and thus $R_j\in\Gamma(\End(W_j))$. The following holds:
\begin{lemm}\label{curvature}{\it Let $(\xi_t)_{t\in I}$ be a solution of (\ref{heat equation}) defined on an interval $I$. Then for each $k=0,1,2, \dots$, the following holds:
\begin{align*}
&\frac{1}{2}\left(\frac{d}{dt}+\Delta_{g_M}\right)|\nabla^kF(\xi_t)|^2 \\
\leq &\sum_{j=0}^{k-1}(\nabla^j(R_{k-j}\nabla^{k-j}F(\xi_t)), \nabla^kF(\xi_t))-\sum_{j=1}^r(\nabla^k\{a_je^{(v_j,\xi_t)}(v_j,F(\xi_t))\})(v_j,\nabla^k F(\xi_t)),
\end{align*}
where $\nabla^k$ denotes $\nabla\circ\cdots\circ \nabla:\Gamma(W_0)\rightarrow \Gamma(W_k)$.
}
\end{lemm}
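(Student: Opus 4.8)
The plan is to combine the Bochner--Weitzenb\"ock identity for the rough Laplacian with a commutation of $\Delta_{g_M}$ past the iterated covariant derivative $\nabla^k$, and then to feed in the identity for $(\tfrac{d}{dt}+\Delta_{g_M})F(\xi_t)$ already obtained in the proof of Lemma~\ref{lemma2}. Since $\Delta_{0,W_k}$ is the connection (rough) Laplacian $\nabla^{\ast}\nabla$ on $\Gamma(W_k)$, the standard Bochner identity gives
\begin{align*}
\frac{1}{2}\left(\frac{d}{dt}+\Delta_{g_M}\right)|\nabla^kF(\xi_t)|^2
=\left(\left\{\frac{d}{dt}+\Delta_{0,W_k}\right\}\nabla^kF(\xi_t),\nabla^kF(\xi_t)\right)-|\nabla^{k+1}F(\xi_t)|^2 ,
\end{align*}
so it is enough to control the first term on the right and discard the nonpositive second one. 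Because the Levi-Civita connection does not depend on $t$, $\tfrac{d}{dt}$ commutes with $\nabla^k$, and hence $\{\tfrac{d}{dt}+\Delta_{0,W_k}\}\nabla^kF(\xi_t)=\nabla^k\tfrac{d}{dt}F(\xi_t)+\Delta_{0,W_k}\nabla^kF(\xi_t)$.

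The key step is the commutation formula
\begin{align*}
\Delta_{0,W_k}\nabla^kF(\xi_t)-\nabla^k\Delta_{g_M}F(\xi_t)=\sum_{j=0}^{k-1}\nabla^j\bigl(R_{k-j}\nabla^{k-j}F(\xi_t)\bigr),
\end{align*}
which I would establish by induction on $k$. The case $k=0$ is vacuous; for $k=1$ it reduces to $\Delta_{0,W_1}\nabla F(\xi_t)-\nabla\Delta_{g_M}F(\xi_t)=R_1\nabla F(\xi_t)$, which follows at once from $\Delta_{0,W_1}=\Delta_{1,W_0}+R_1$ together with the fact that the ordinary exterior derivative commutes with the Hodge Laplacian on the untwisted de Rham complex ($W_0$ is trivial, so there is no curvature correction). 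For the inductive step I would use the single-step relation
\begin{align*}
\Delta_{0,W_\ell}(\nabla s)=\nabla(\Delta_{0,W_{\ell-1}}s)+R_\ell(\nabla s)+(\text{curvature terms of lower order in }s),
\end{align*}
valid for $s\in\Gamma(W_{\ell-1})$, obtained by writing $\Delta_{0,W_\ell}=\Delta_{1,W_{\ell-1}}+R_\ell$ under $\Omega^0(W_\ell)\cong\Omega^1(W_{\ell-1})$ and commuting $d^{\nabla}$ past the Hodge Laplacian via the Weitzenb\"ock formula. Applying this relation repeatedly with $s=\nabla^{\ell-1}F(\xi_t)$, and using the essential point that $F(\xi_t)$ is a function valued in the \emph{flat} vector space $V$ — so that curvature annihilates $F(\xi_t)$ itself and no genuinely zeroth-order-in-$F(\xi_t)$ term survives — the correction terms should reorganize into exactly $\sum_{j}\nabla^j(R_{k-j}\nabla^{k-j}F(\xi_t))$; the packaging of the Weitzenb\"ock curvature into the endomorphisms $R_j$ is precisely what makes the induction close.

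Finally, from the computation in the proof of Lemma~\ref{lemma2} we have $(\tfrac{d}{dt}+\Delta_{g_M})F(\xi_t)=-\sum_{j=1}^ra_je^{(v_j,\xi_t)}(v_j,F(\xi_t))v_j$. Since each $v_j$ is a constant vector, applying $\nabla^k$ and then pairing with $\nabla^kF(\xi_t)$ yields exactly $-\sum_{j=1}^r\bigl(\nabla^k\{a_je^{(v_j,\xi_t)}(v_j,F(\xi_t))\}\bigr)(v_j,\nabla^kF(\xi_t))$, the last sum in the statement. Assembling the Bochner identity, the commutation formula, and this substitution, and dropping $-|\nabla^{k+1}F(\xi_t)|^2\le 0$, gives the asserted inequality. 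The main obstacle I anticipate is the bookkeeping inside the induction: one must verify that the curvature terms generated by iterating the single-step commutation collapse precisely into the shape $\nabla^j(R_{k-j}\nabla^{k-j}F(\xi_t))$ with $R_{k-j}\in\Gamma(\End(W_{k-j}))$, tracking carefully on which factor each covariant derivative lands and exploiting throughout the flatness of the coefficient space $V$. Everything else is a routine consequence of Lemma~\ref{lemma2} and the Bochner identity.
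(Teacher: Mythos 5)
Your proposal is correct and follows essentially the same route as the paper: the Bochner identity with the nonpositive gradient term discarded, an induction on $k$ giving $\Delta_{0,W_k}\nabla^kF(\xi_t)=\sum_{j=0}^{k-1}\nabla^j(R_{k-j}\nabla^{k-j}F(\xi_t))+\nabla^k\Delta_{g_M}F(\xi_t)$, and substitution of the evolution identity $(\tfrac{d}{dt}+\Delta_{g_M})F(\xi_t)=-\sum_{j}a_je^{(v_j,\xi_t)}(v_j,F(\xi_t))v_j$ from Lemma~\ref{lemma2}. The bookkeeping you worry about in the inductive step is exactly what the paper resolves by invoking the single identity $\nabla\circ\Delta_{0,W_j}=\Delta_{1,W_j}\circ\nabla$, after which the shifted sum reindexes cleanly into the stated form.
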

\begin{proof}
By calculation, we have
\begin{align*}
&\frac{1}{2}\left(\frac{d}{dt}+\Delta_{g_M}\right)|\nabla^kF(\xi_t)|^2 \\
=&\left(\left\{\frac{d}{dt}+\Delta_{0,W_k}\right\}\nabla^kF(\xi_t),\nabla^kF(\xi_t)\right)-|\nabla(\nabla^kF(\xi_t))|^2 \\
\leq &\left(\left\{\frac{d}{dt}+\Delta_{0,W_k}\right\}\nabla^kF(\xi_t),\nabla^kF(\xi_t)\right).
\end{align*}
By induction on $k$, we have
\begin{align*}
\Delta_{0, W_k}\nabla^k F(\xi_t)=\sum_{j=0}^{k-1}\nabla^j \{R_{k-j}\nabla^{k-j}F(\xi_t)\}+\nabla^k(\Delta_{g_M}F(\xi_t)),
\end{align*}
where we have used $\nabla\circ \Delta_{0, W_j}=\Delta_{1,W_j}\circ \nabla$. Therefore we have
\begin{align*}
&\left(\left\{\frac{d}{dt}+\Delta_{0,W_k}\right\}\nabla^kF(\xi_t),\nabla^kF(\xi_t)\right) \\
=&\sum_{j=0}^{k-1}(\nabla^j(R_{k-j}\nabla^{k-j}F(\xi_t)), \nabla^kF(\xi_t))+\left(\nabla^k\left(\frac{d}{dt}+\Delta_{g_M}\right)F(\xi_t), \nabla^k F(\xi_t)\right).
\end{align*}
From the same calculation as in the proof of Lemma \ref{lemma2}, we have the following:
\begin{align*}
\left(\frac{d}{dt}+\Delta_{g_M}\right)F(\xi_t) 
&=-\sum_{j=1}^ra_je^{(v_j,\xi_t)}(v_j,F(\xi_t))v_j.
\end{align*}
From this equation the desired claim is deduced.
\end{proof}

\begin{lemm}[\cite{Ham1,Kob1}] \label{max}{\it Let $f:[a,b]\times M\rightarrow \R$ be a continuous function such that $f\left.\right|_{[a,b]\times\partial M}$ is constant and that $f$ is $C^2$ on $(a,b ]\times M\backslash \partial M$. Suppose that the following holds on $(a,b]\times M\backslash \partial M$:
\begin{align*}
\left(\frac{d}{dt}+\Delta_{g_M}\right)f\leq 0.
\end{align*}
Then $\sup_{p\in M}f(t,p)$ is a monotone decreasing function for $t\in[a,b]$.
} 
\end{lemm}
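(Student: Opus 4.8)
\emph{Proof proposal.} The plan is to prove this as the classical parabolic weak maximum principle, in the sharper form: for all $t_1<t_2$ in $[a,b]$ one has $m(t_2)\le m(t_1)$, where $m(t)\coloneqq \sup_{p\in M}f(t,p)$ — a maximum, since $M$ is compact and $f$ is continuous. Monotonicity of $m$ on $[a,b]$ is then immediate.

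First I would fix $t_1<t_2$ and $\epsilon>0$, and introduce the auxiliary function $g_\epsilon(t,p)\coloneqq f(t,p)-\epsilon(t-t_1)$ on the compact set $[t_1,t_2]\times M$. The point of the extra term is that on $(t_1,t_2]\times M\backslash\partial M$ it turns the hypothesis into a \emph{strict} inequality:
\begin{align*}
\left(\frac{d}{dt}+\Delta_{g_M}\right)g_\epsilon=\left(\frac{d}{dt}+\Delta_{g_M}\right)f-\epsilon\le -\epsilon<0.
\end{align*}
Since $g_\epsilon$ is continuous on the compact set $[t_1,t_2]\times M$, it attains its maximum at some $(t_0,p_0)$. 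The key step is to rule out the case $t_0\in(t_1,t_2]$ and $p_0\in M\backslash\partial M$ occurring together: if it did, then $\Delta_{g_M}g_\epsilon(t_0,p_0)\ge0$ because $p_0$ is an interior spatial maximum of $g_\epsilon(t_0,\cdot)$ and $\Delta_{g_M}$ is the non-negative geometric Laplacian, while $\frac{d}{dt}g_\epsilon(t_0,p_0)\ge0$ because $t_0$ maximizes $t\mapsto g_\epsilon(t,p_0)$ on $[t_1,t_2]$ with $t_0>t_1$ (the derivative vanishes at an interior time, and is $\ge0$ from the left when $t_0=t_2$; here one uses that $f$ is $C^2$ up to $t=b\ge t_2$). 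Adding these gives $\left(\frac{d}{dt}+\Delta_{g_M}\right)g_\epsilon(t_0,p_0)\ge0$, contradicting the strict inequality above, which does apply at $(t_0,p_0)$ since $t_0>t_1\ge a$ forces $(t_0,p_0)\in(a,b]\times M\backslash\partial M$.

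Hence the maximum of $g_\epsilon$ over $[t_1,t_2]\times M$ is attained either at a point with time coordinate $t_1$, or at a point of $[t_1,t_2]\times\partial M$ (only the former if $\partial M=\emptyset$). In the first case the maximum value is $\le\sup_{p\in M}f(t_1,p)=m(t_1)$; in the second, writing $c$ for the constant value of $f$ on $[a,b]\times\partial M$, the value is $c-\epsilon(t_0-t_1)\le c\le m(t_1)$. Either way $\max_{[t_1,t_2]\times M}g_\epsilon\le m(t_1)$, so for every $p\in M$ we get $f(t_2,p)-\epsilon(t_2-t_1)=g_\epsilon(t_2,p)\le m(t_1)$; taking the supremum over $p$ and letting $\epsilon\to0$ yields $m(t_2)\le m(t_1)$.

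I do not expect a deep obstacle here; the delicate points are essentially bookkeeping. The main things to be careful about will be: the sign convention for $\Delta_{g_M}$ (so that a spatial maximum yields $\Delta_{g_M}g_\epsilon\ge0$); the one-sided time derivative at the endpoint $t_0=t_2$, using $C^2$-regularity up to $t=b$; checking that the hypothetical extremal point lies in the region $(a,b]\times M\backslash\partial M$ where the differential inequality is assumed; and invoking the constancy of $f$ on $[a,b]\times\partial M$ to bound $g_\epsilon$ there by $m(t_1)$.
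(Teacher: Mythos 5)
Your proposal is correct and follows essentially the same route as the paper: both perturb $f$ by a linear term $-\epsilon t$ to make the parabolic inequality strict, locate the maximum of the perturbed function on a compact time slab, rule out an interior space-time maximum via the signs of $\frac{d}{dt}$ and the (non-negative at a spatial maximum) geometric Laplacian, handle the boundary case using constancy of $f$ on $[a,b]\times\partial M$, and let $\epsilon\to 0$. The only cosmetic difference is that the paper normalizes $t_1=a$ "without loss of generality" where you work directly on $[t_1,t_2]$.
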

\begin{proof} Following \cite{Kob1}, for the readers convenience, we give a proof of the above lemma. For each $\epsilon>0$, let $f_\epsilon\coloneqq f-\epsilon t$. It is enough to show that for each $t_1<t_2$, $\sup_{p\in M}f_\epsilon(t_1,p)\geq \sup_{p\in M}f_\epsilon(t_2,p)$ holds for all $\epsilon>0$. Without loss of generality, we can assume that $t_1=a$. Let $(\bar{t}, \bar{p})\in [a,b]\times M$ be a point such that $f_\epsilon(\bar{t},\bar{p})=\sup_{(t,p)\in[a,b]\times M}f_\epsilon(t,p)$. We show that $\bar{t}=a$. We first consider the case that $\bar{p}\in\partial M$. Then $f_\epsilon(\bar{t},\bar{p})=-\epsilon \bar{t}+C$ with some constant $C$. Therefore, $\bar{t}=a$. We next consider the case that $\bar{p}\notin \partial M$. Suppose that $\bar{t}\in (a,b]$. Then the following holds:
\begin{align*}
\frac{df_\epsilon}{dt}(\bar{t},\bar{p})\leq (-\Delta_{g_M}f)(\bar{t},\bar{p})-\epsilon\leq -\epsilon <0.
\end{align*}
This contradicts the assumption that $f_\epsilon(\bar{t},\bar{p})=\sup_{(t,p)\in[a,b]\times M}f_\epsilon(t,p)$. Therefore, it must be that $\bar{t}=a$ and thus we have the desired claim.
\end{proof}
\begin{proof}[Proof of Theorem \ref{main theorem 1}] 
We first show the uniqueness of the solution of equation (\ref{heat equation}). Let $(\xi_t)_{t\in[0,\infty)}$ and $(\xi^\prime_t)_{t\in[0,\infty)}$ be solutions of equation (\ref{heat equation}) such that $\xi_0=\xi^\prime_0$ and that $\xi_t\left.\right|_{\partial M}=\xi^\prime_t\left.\right|_{\partial M}$ for all $t\in[0,\infty)$. We define a function $f:[0,\infty)\times M\rightarrow \R$ as $f\coloneqq |\xi_t-\xi_t^\prime|^2$. Then, the function $f$ vanishes on $[0,\infty)\times \partial M\cup\{0\}\times M$. Therefore from Lemma \ref{lemma1} and Lemma \ref{max}, $f$ vanishes on $[0,\infty)\times M$ and thus $\xi_t=\xi^\prime_t$ for all $t\in [0,\infty)$. 

Secondly, we demonstrate the existence of a solution for a short time interval for equation (\ref{heat equation}). For each $T>0$, let $C^\infty([0,T]/0\times M,V)$ be the space of $V$-valued $C^\infty$-functions on $[0,T]\times M$ whose all of the derivative at $\{0\}\times M$ vanishes. We denote by $L^p_k([0,T]/0\times M, V)$ the completion of $C^\infty ([0,T]/0\times M, V)$ by the weighted $L^p_k$-sobolev norm (see \cite{Ham1, Kob1}). Furthermore, let $L^p_k([0,T]/0\times M, V)_\#\subseteq L^p_k([0,T]/0\times M, V)$ be a closed subspace defined as 
\begin{align*}
&L^p_k([0,T]/0\times M, V)_\#\coloneqq \\
&\{ (\xi_t)_{t\in [0,T]}\in L^p_k([0,T]/0\times M, V)\mid \xi_t\left.\right|_{\partial M}=0\ \text{for all $t\in[0,T]$}\}.
\end{align*}
We take $p>0$ large enough so that $L^p_2([0,T]/0\times M)\subseteq C^0([0,T]\times M, V)$ and that \cite[the first theorem on page 116]{Ham1} holds. Let $(\widetilde{\xi}_t:M\rightarrow V)_{t\in[0,T]}$ be a smooth 1-parameter family of $V$-valued functions such that $\xi_0=\eta$ and that $\widetilde{\xi}_t\left.\right|_{\partial M}=\eta\left.\right|_{\partial M}$ for all $t\in[0,T]$. We define a map $H:L^p_2([0,T]/0\times M)_\#\rightarrow L^p([0,T]\times M)$ as follows:
\begin{align*}
H(\xi^\#_t)\coloneqq \frac{d(\xi^\#_t+\widetilde{\xi}_t)}{dt}+F(\xi^\#_t+\widetilde{\xi}_t)\ \text{for $\xi^\#_t\in L^p_2([0,T]/0\times M)$},
\end{align*}
where $F$ is defined in Definition \ref{F, G}. The linearization of $H$ at 0, denoted as $H_\ast$, is given by the following:
\begin{align*}
H_\ast(\chi_t)=\frac{d\chi_t}{dt}+\Delta_{g_M}\chi_t+\sum_{j=1}^ra_je^{(v_j,\widetilde{\xi}_t)}(v_j,\chi_t)v_j 
\end{align*}
for all $\chi_t\in L^p_2([0,T]/0\times M)$. Let $\chi_t\in \ker H_\ast$. Then from the regularity theorem for parabolic operators (see \cite{Ham1, Kob1}), $\chi_t$ is smooth on $(0,T]\times M$. Moreover, we have
\begin{align*}
\frac{1}{2}\left(\frac{d}{dt}+\Delta_{g_M}\right)|\chi_t|^2\leq -\sum_{j=1}^ra_je^{(v_j,\widetilde{\xi}_t)}(v_j,\chi_t)^2\leq 0.
\end{align*}
Then from Lemma \ref{max}, $\chi_t=0$ and thus $H_\ast$ is injective. By using \cite[the first theorem on page 116]{Ham1} and the invariance of indices of Fredholm operators under perturbations by compact operators (see \cite{Ham1, Kob1}), we see that $H_\ast$ is bijective. From the implicit function theorem (see \cite{Ham1, Kob1}), we can find a small $0<\epsilon \ll T$ and a $\xi^\#\in L^p_2([0,\epsilon)/0\times M)_\#$ such that $H(\widetilde{\xi}_t+\xi^\#_t)=0$. Then from the regularity theorem for parabolic operators (see \cite{Ham1, Kob1}), $\xi^\#_t$ is smooth on $(0,\epsilon)\times M$.

Thirdly, we show the global existence of a solution to equation (\ref{heat equation}) satisfying the boundary condition. Suppose that for a finite $T>0$, we have a solution $(\xi_t)_{t\in[0,T)}$ satisfying the boundary condition. From Lemma \ref{lemma1} and Lemma \ref{max}, by using the same argument as \cite[Corollary 15]{Don0}, we see that there exists a continuous map $\xi_T: M\rightarrow V$ satisfying the boundary condition $\xi_T\left.\right|_{\partial M}=\eta\left.\right|_{\partial M}$ such that $\lim_{t\rightarrow T}\sup_M|\xi_t-\xi_T|=0$. From Lemma \ref{lemma2} and Lemma \ref{max}, we also see that $\sup_M|\Delta_{g_M}\xi_t|$ is bounded for $t\in [0, T)$. Then from the $L^p$-estimate (see \cite[p.96]{Ham1}) for Laplacian $\Delta_{g_M}$, we see $|\xi_t|_{L^p_2}$ is bounded for any $1<p<\infty$, and thus $\sup_M(|d\xi_t|+|\xi_t|)$ is bounded for $t\in[0,T)$. Then from Lemma \ref{curvature}, we see that $f\coloneqq |d F(\xi_t)|^2$ satisfies
\begin{align*}
\left(\frac{d}{dt}+\Delta_{g_M}\right)f\leq A(f+1)
\end{align*}
for a positive constant $A$. Then from the argument of \cite[pp.120-121, proof of (8.15)]{Kob1} and the existence theorem of the solution of the linear heat equation \cite[pp.116-118]{Ham1}, we see that $\sup_M|d F(\xi_t)|$ is bounded for $t\in[0,T)$. Then by repeating the argument above, we see that $\sup_M(|\xi_t|+|d\xi_t|+|\nabla^2\xi_t|)$ is bounded for $t\in [0, T)$. By induction, we see that $\sup_M|\nabla^kF(\xi_t)|$ and $\sup_M(\sum_{j=0}^k|\nabla^j\xi_t|)$ are bounded for every $k$. From this, we also easily see that $|\nabla^l\frac{d^k\xi_t}{dt^k}|$ is bounded for all $k$ and $l$. Therefore, $(\xi_t)_{t\in [0,T)}$ converges to $\xi_T$ in $C^\infty$. This implies the global existence of a solution of equation (\ref{heat equation}).

Fourthly, for the solution $(\xi_t)_{t\in [0,\infty)}$ of equation (\ref{heat equation}) satisfying the boundary condition (\ref{boundary}), we prove (i) of the latter statement of Theorem \ref{main theorem 1}. Suppose that $\partial M$ is not empty. We first note the uniqueness of the solution of equation (\ref{KW}) satisfying the same boundary condition follows from Lemma \ref{xi-} and the maximum principle (see \cite[p.94]{Don1}). For the rest of the proof of (i) of Theorem \ref{main theorem 1}, we use the argument of \cite{Don1}. Since $F(\xi_t)\left.\right|_{\partial M}=0$ for all $t\in [0,\infty)$ and we have Lemma \ref{lemma2}, by applying \cite[Lemma on p.98]{Don1} to $|F(\xi_t)|^2$, we see that there exists a positive constant $C$ and $\mu$ such that
\begin{align*}
\sup_M|F(\xi_t)|^2\leq Ce^{-\mu t} \ \text{for all $t\in[0,\infty)$}.
\end{align*}
From this, we see that $\sup_M|\xi_t|$ is bounded for $t\in [0,\infty)$. Then by the same argument as in the proof of the global existence of $(\ref{heat equation})$ above, we see that for all $k=0,1,2,\dots,$ $\sup_M(\sum_{j=0}^k|\nabla^j\xi_t|)$ and $\sup_M|\nabla^k F(\xi_t)|$ are bounded for $t\in [0,\infty)$ and thus $|\nabla^l\frac{d^k\xi_t}{dt^k}|$ is bounded for all $k$ and $l$. Therefore we can find a sequence $(t_i)_{i\in \N}$ going to infinity and a smooth $\xi: M\rightarrow V$ such that $\xi_{t_i}\rightarrow \xi$ in $C^\infty$. This $\xi$ solves equation (\ref{KW}) and satisfies the Dirichlet boundary condition. 

Finally, we prove (ii) of Theorem \ref{main theorem 1}. Suppose that the boundary $\partial M$ is empty and that for each $j=1,\dots, r$, $a_j^{-1}(0)$ is a measure $0$ set and $\log a_j$ is integrable. Let $E$ be the functional introduced in \cite{Miy1}. From the assumption, we see that $E$ is bounded below and the estimate of \cite[Lemma 3]{Miy1} holds. By calculation we have
\begin{align*}
\frac{d}{dt} E(\xi_t)=-\int_M|F(\xi_t)|^2.
\end{align*}
In particular, $E(\xi_t)$ is bounded above and thus $|\xi_t|_{L^2}$ is bounded for $t\in[0,\infty)$. Then from \cite[pp.72-73]{LT1} (see \cite[Lemma 6]{Miy1}) and the proof of \cite[Lemma 5]{Miy1}, we see that $\sup_M|\xi_t|$ is bounded for $t\in[0,\infty)$. Then by repeating the argument of the proof of (i) of Theorem \ref{main theorem 1}, we can find a sequence $(t_i)_{i\in \N}$ going to infinity and a smooth $\xi: M\rightarrow V$ such that $\xi_{t_i}\rightarrow \xi$ in $C^\infty$. This $\xi$ solves equation (\ref{KW}). The uniqueness of the solution of equation (\ref{KW}) follows from \cite[Theorem 1]{Miy1}.
\end{proof}
\begin{proof}[Proof of Theorem \ref{main theorem 3}]
For each $j=1,\dots, r$, let $f_j\coloneqq (\xi-\xi^\prime,u_j)$. We set $\varphi\coloneqq \sum_{j=1}^re^{f_j/2}u_j$, $\widetilde{\varphi}\coloneqq \sum_{j=1}^re^{f_j}u_j$. Then we can calculate $\Delta_{g_M}\log|\sum_{j=1}^re^{(\xi-\xi^\prime,u_j)}|=\Delta_{g_M}\log|\varphi|^2$ as follows:
\begin{align*}
\Delta_{g_M}\log|\varphi|^2&=\frac{\Delta_{g_M}|\varphi|^2}{|\varphi|^2}+\frac{|d|\varphi|^2|^2}{|\varphi|^4} \\
&\leq \frac{2(\Delta_{g_M}\varphi,\varphi)}{|\varphi|^2}-\frac{2|d\varphi|^2}{|\varphi|^2}+\frac{4|d\varphi|^2}{|\varphi|^2} \\
&= \frac{2(\Delta_{g_M}\varphi,\varphi)}{|\varphi|^2}+\frac{2|d\varphi|^2}{|\varphi|^2},
\end{align*}
where we have used $d|\varphi|^2=2(d\varphi,\varphi)$ and the inequality $|(d\varphi,\varphi)|^2\leq |d\varphi|^2|\varphi|^2$. Furthermore, $\Delta_{g_M}\varphi$ is calculated as follows: 
\begin{align*}
\Delta_{g_M}\varphi=\sum_{j=1}^r\frac{1}{2}\Delta_{g_M}f_je^{\frac{f_j}{2}}u_j-\sum_{j=1}^r\frac{1}{4}|df_j|^2e^{\frac{f_j}{2}}u_j.
\end{align*}
Therefore, we have
\begin{align*}
(\Delta_{g_M}\varphi,\varphi)&=\sum_{j=1}^r(\frac{1}{2}\Delta_{g_M}f_je^{\frac{f_j}{2}}u_j-\sum_{j=1}^r\frac{1}{4}|df_j|^2e^{\frac{f_j}{2}}u_j,\varphi) \\
&=\frac{1}{2}(\Delta_{g_M}(\xi-\xi^\prime),\widetilde{\varphi})-|d\varphi|^2.
\end{align*}
Then we have the following:
\begin{align*}
\Delta_{g_M}\log|\varphi|^2\leq \frac{(\Delta_{g_M}(\xi-\xi^\prime),\widetilde{\varphi})}{|\varphi|^2}.
\end{align*}
Since, for each $j=1,\dots, r$, the function 
\begin{align*}
((e^{(v_j,\xi)}-e^{(v_j,\xi^\prime)})v_j,\widetilde{\varphi})=(e^{(v_j,\xi)}-e^{(v_j,\xi^\prime)})(e^{(\xi-\xi^\prime,u_j)}-e^{(\xi-\xi^\prime,u_{j+1})})
\end{align*}
is a positive function, we have
\begin{align*}
&\Delta_{g_M}\log|\varphi|^2 \\
\leq &\frac{(\Delta_{g_M}(\xi-\xi^\prime),\widetilde{\varphi})}{|\varphi|^2} \\
\leq &\frac{1}{|\varphi|^2}\{(\Delta_{g_M}\xi+\sum_{j=1}^ra_je^{(v_j,\xi)}v_j-w,\widetilde{\varphi}) 
-(\Delta_{g_M}\xi^\prime+\sum_{j=1}^ra_je^{(v_j,\xi^\prime)}v_j-w, \widetilde{\varphi})\}. \\
\leq &\frac{|\widetilde{\varphi}|}{|\varphi|}\{|\Delta_{g_M}\xi+\sum_{j=1}^ra_je^{(v_j,\xi)}v_j-w| 
+|\Delta_{g_M}\xi^\prime+\sum_{j=1}^ra_je^{(v_j,\xi^\prime)}v_j-w|\}. \\
\leq &|\Delta_{g_M}\xi+\sum_{j=1}^ra_je^{(v_j,\xi)}v_j-w| +|\Delta_{g_M}\xi^\prime+\sum_{j=1}^ra_je^{(v_j,\xi^\prime)}v_j-w|. 
\end{align*}
Then we have the desired claim.
\end{proof}
\begin{proof}[Proof of Theorem \ref{main theorem 4}] Let $\xi$ be a $C^2$-solution of (\ref{KW}). For each $j=1,\dots, r$, let $f_j\coloneqq (v_j,\xi)+\log(a_j)$. We set $\varphi\coloneqq \sum_{j=1}^re^{\frac{1}{2}f_j}u_j, \Psi\coloneqq \sum_{j=1}^re^{f_j}v_j$. By the same calculation as in the proof of Theorem \ref{main theorem 3}, we have
\begin{align*}
\Delta_{g_M}\log|\varphi|^2 \leq &\frac{1}{|\varphi|^2}\sum_{j=1}^r\Delta_{g_M}f_je^{f_j} \\
=&\frac{1}{|\varphi|^2}\sum_{j=1}^r\Delta_{g_M}((v_j,\xi)+\log(a_j))e^{f_j}.
\end{align*}
By assumption, we have
\begin{align*}
&\frac{1}{|\varphi|^2}\sum_{j=1}^r\Delta_{g_M}((v_j,\xi)+\log(a_j))e^{f_j} \\
&\leq \frac{1}{|\varphi|^2}\sum_{j=1}^r (\Delta_{g_M}\xi-w,v_j)e^{f_j} \\
&=-\frac{1}{|\varphi|^2}\sum_{j=1}^r(\Psi, v_j)e^{f_j} \\
&=-\frac{|\Psi|^2}{|\varphi|^2}.
\end{align*}
Then we have the desired claim.
\end{proof}

\noindent
{\bf Acknowledgements.} I am very grateful to Takahiro Aoi for his valuable discussions. I would like to express my gratitude to Ryushi Goto, Yoshinori Hashimoto, and Hisashi Kasuya for their valuable discussions and many supports. I would also like to express my sincere gratitude to Takuro Mochizuki for answering my many questions, and for informing me of many things about harmonic bundles and cyclic Higgs bundles.

\noindent
E-mail address 1: natsuo.m.math@gmail.com \\
\noindent
E-mail address 2: n-miyatake@imi.kyushu-u.ac.jp

\noindent
Institute of Mathematics for Industry, Kyushu University 744 Motooka, Fukuoka
819-0395, Japan
\end{document}